\newtheorem{thm}{Theorem}
\newtheorem{lem}{Lemma}
\renewcommand{\Re}{\mathbb R}
\newcommand{\Ze}{\mathbb Z}
\newcommand{\Red}{\Re^d}
\newcommand{\ReD}{\Re^{D}}
\newcommand{\Int}{\int\limits}
\newcommand{\dx}{\diff\! x}
\DeclareMathOperator{\conv}{conv}
\DeclareMathOperator{\inter}{int}
\DeclareMathOperator{\vol}{V}
\DeclareMathOperator{\diff}{d}
\DeclareMathOperator{\interior}{int}
\newcommand{\bd}{\operatorname{bd}}
\newcommand{\setbuilder}[2]{\{#1:#2\}}
\newcommand{\st}{\; : \; }
\newcommand{\numbersystem}[1]{\mathbb{#1}}
\newcommand{\fhi}{\varphi}
\newcommand{\bR}{\numbersystem{R}}
\newcommand{\norm}[1]{\left\lVert#1\right\rVert}
\newcommand{\abs}[1]{\left\lvert#1\right\rvert}
\newcommand{\card}[1]{\left\lvert#1\right\rvert}
\begin{document}

\title{Arrangements of homothets of a convex body II}
\author[M. Nasz\'odi, K. J. Swanepoel]{M\'arton Nasz\'odi \and Konrad J. Swanepoel}
\address{Department of Geometry, Lorand E\"otv\"os University, Pazm\'any 
P\'eter S\'etany 1/C Budapest, Hungary 1117}\email{marton.naszodi@math.elte.hu}
\address{Department of Mathematics, London School of Economics and Political 
Science, Houghton Street, London WC2A 2AE, United 
Kingdom}\email{k.swanepoel@lse.ac.uk}
\begin{abstract}
A family 
of homothets of an $o$-symmetric convex body $K$ in $d$-dimensional Euclidean space is called a Minkowski arrangement if no homothet contains the center of any other homothet in its interior.
We show that any pairwise intersecting 
Minkowski arrangement of a $d$-dimensional convex body has at most $2\cdot 3^d$ members.
This improves a result of Polyanskii (Discrete Mathematics \textbf{340} (2017), 1950--1956).
Using similar ideas, we also give a proof the following result of Polyanskii: 
Let $K_1,\dots,K_n$ be a sequence of homothets of the $o$-symmetric convex body $K$, such that for any $i<j$, the center of $K_j$ lies on the boundary of $K_i$.
Then $n\leq O(3^d d)$.
\end{abstract}
\maketitle

\section{Introduction}
We use the notation $[n]=\{1,2,\dots,n\}$.
A \emph{convex body} $K$ in the $d$-dimensional Euclidean space $\Red$ is a 
compact convex set with non-empty interior, and is \emph{$o$-symmetric} if $K=-K$.
A (positive) \emph{homothet} of $K$ is a set 
of the form $\lambda K+v:=\setbuilder{\lambda k+v}{k\in K}$, where $\lambda>0$ is the 
homothety ratio, and $v\in\Red$ is a translation vector.
If $K$ is $o$-symmetric, we also call $v$ the \emph{center} of the homothet $\lambda K+v$.
An \emph{arrangement of homothets of $K$} is a collection $\setbuilder{\lambda_i K+v_i}{i\in[n]}$.
A \emph{Minkowski arrangement} of an $o$-symmetric convex body $K$ is a family $\{v_i+\lambda_i K\}$ 
of homothets of $K$ such that none of the homothets contains the center of any other homothet in its interior.
This notion was introduced by L. Fejes T\'oth \cite{FT65} in the context of Minkowski's fundamental theorem on the minimal determinant of a packing lattice for a symmetric convex body, and was further studied by him in \cite{FT67, FT99}, by B\"or\"oczky and Szab\'o in \cite{BSz04}, and in connection with the Besicovitch covering theorem by F\"uredi and Loeb \cite{FL94}.
Recently, Minkowski arrangements have been used to study a problem arising in the design of wireless networks~\cite{NSS17}.
In \cite{NPS17} it was shown that the largest cardinality of a pairwise intersecting Minkowski arrangement of homothets of an $o$-symmetric convex body in $\Red$ is $O(3^d d\log d)$.
This was improved to $3^{d+1}$ by Polyanskii \cite{P17}.
We make the following slight improvement.
\begin{thm}\label{thm:sashaimproved}
For any $o$-symmetric convex body $K$ in $\Red$, a pairwise intersecting 
Minkowski arrangement has at most $2\cdot 3^d$ members.
\end{thm}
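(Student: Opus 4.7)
My plan is to deduce the bound from two ingredients: a sharp estimate for Minkowski arrangements sharing a common point, and a two-point piercing lemma for the pairwise intersecting case. Writing $\norm{\cdot}_K$ for the Minkowski functional of $K$, the Minkowski condition reads $\norm{v_i-v_j}_K \ge \max(\lambda_i,\lambda_j)$, while pairwise intersection reads $\norm{v_i-v_j}_K \le \lambda_i+\lambda_j$.

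\emph{Common-point bound.}
I first establish that any Minkowski arrangement whose members share a common point has at most $3^d$ elements. Translating so the common point is the origin gives $\norm{v_i}_K \le \lambda_i$ for every $i$. Setting $w_i := v_i/\lambda_i \in K$, I will show $\norm{w_i-w_j}_K \ge 1$ for all distinct $i,j$. Indeed, assuming $\lambda_i \le \lambda_j$, the decomposition $\lambda_iw_i-\lambda_jw_j = \lambda_i(w_i-w_j)-(\lambda_j-\lambda_i)w_j$ and the triangle inequality (using $\norm{w_j}_K \le 1$) give
\[
\norm{v_i-v_j}_K \le \lambda_i \norm{w_i-w_j}_K + (\lambda_j - \lambda_i),
\]
which combined with the Minkowski lower bound $\norm{v_i-v_j}_K \ge \lambda_j$ forces $\norm{w_i-w_j}_K \ge 1$. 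Consequently the bodies $w_i + \tfrac{1}{2}K$ have pairwise disjoint interiors and are contained in $\tfrac{3}{2}K$, so a volume comparison bounds their number by $3^d$.

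\emph{Two-piercing and conclusion.}
Next I claim there exist two points $p_1,p_2 \in \Red$ such that every homothet contains at least one of them. Granted this, the sub-families $\mathcal{F}_k := \setbuilder{i}{p_k \in \lambda_iK+v_i}$ each satisfy the hypothesis of the common-point bound, so $n \le |\mathcal{F}_1|+|\mathcal{F}_2| \le 2\cdot 3^d$.

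\emph{Main obstacle.}
Establishing the two-piercing is the heart of the proof. A naive choice such as $p_1 = v_1$ is inadequate: it pierces only homothets with $\norm{v_i}_K = \lambda_i$, and the remainder need not share a common point, as shown in the plane by three unit translates of $K$ whose centers form an equilateral triangle slightly larger than the one inscribed in $\partial K$. A successful construction will likely pick $p_1,p_2$ adaptively, for instance by choosing a direction $u$ and splitting the indices by the sign of $\langle v_i,u\rangle$, then arguing via a Helly-type argument that the restriction of the arrangement to each closed half-space admits a common point. The verification will require a careful interaction between the Minkowski lower bound $\norm{v_i-v_j}_K \ge \max(\lambda_i,\lambda_j)$ and the pairwise intersection upper bound $\norm{v_i-v_j}_K \le \lambda_i+\lambda_j$.
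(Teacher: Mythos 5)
Your \emph{common-point bound} is correct and a clean argument: translating a common point to the origin gives $\norm{v_i}_K\le\lambda_i$, the normalization $w_i=v_i/\lambda_i$ lands in $K$ with $\norm{w_i-w_j}_K\ge 1$, and the volume comparison of the $w_i+\tfrac12 K$ inside $\tfrac32 K$ gives at most $3^d$ members through a common point. (This is sharp, e.g.\ for the $3^d$ translates of the cube $[-1,1]^d$ by $\{-1,0,1\}^d$, which all contain the origin.)

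The problem is the \emph{two-piercing claim}, which you yourself flag as the ``main obstacle'' and do not prove. As written it is a genuine gap, and the sketch you offer for closing it does not work. Restricting to the homothets whose centers lie in a closed half-space does not let you invoke Helly: in $\Red$ with $d\ge 2$, Helly's theorem needs $(d+1)$-wise intersection, whereas you only have pairwise intersection, and passing to a half-space does not upgrade that. It is also not clear to me that the two-piercing statement is even true for general $K$ and $d$; nothing in the Minkowski condition obviously forces it, and if it were known the literature would likely state it.

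The paper's proof goes a genuinely different way, and in particular does not try to pierce the arrangement in $\Red$. It lifts each homothet to a point $x_i=(\lambda_i^{-1}v_i,\lambda_i^{-1})\in\Re^{d+1}$, shows $o\notin\conv\{x_i\}$ and that for each pair $i<j$ there is a linear functional $f_{ij}$ with $\abs{f_{ij}(x_k)}\le\abs{f_{ij}(x_i)-f_{ij}(x_j)}$ (Theorem~\ref{thm:lifting}; the Helly step there is one-dimensional, applied to projections onto a line). This combinatorial condition is then shown equivalent to a non-overlapping translative packing with a one-sided constraint (Theorem~\ref{thm:equivalence}), and a one-sided volume estimate (Theorem~\ref{thm:closedonesidedbound}) gives $2\cdot 3^d$. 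In that route, the factor $2$ comes from the one-sidedness of the packing bound, not from a $2$-piercing decomposition. So your first lemma is sound, but without a proof of two-piercing the argument is incomplete, and the route to $2\cdot 3^d$ will almost certainly have to come from somewhere else.
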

Note that the $d$-cube has $3^d$ pairwise intersecting translates that form a Minkowski arrangement.
The proof uses ideas from \cite{N06} and \cite{LN09}.

In \cite{NPS17}, bounds on pairwise intersecting Minkowski arrangements were used to give an upper bound of $O(6^d d^2\log d)$ on the length of a sequence of homothets $v_i+\lambda_i K$ of an $o$-symmetric convex body $K$ such that $v_j\in\bd(v_i+\lambda_i K)$ whenever $j> i$.
This bound was improved to $O(3^d d)$ by Polyanskii \cite{P17}.
We use some similar ideas to the proof of Theorem~\ref{thm:sashaimproved} to give a short proof of this result of Polyanskii.
\begin{thm}[Polyanskii \cite{P17}]\label{thm:sasha2}
Let $K$ be an $o$-symmetric convex body, and $v_1,v_2,\ldots,v_n\in\Red$.
Let $\lambda_1,\lambda_2,\ldots,\lambda_n >0$, and assume that for any $1\leq i<j\leq n$ we have $v_j\in \bd(v_i+\lambda_i K)$. Then $n\leq O(3^d d)$.
\end{thm}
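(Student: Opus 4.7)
The plan is to partition $\{1,\ldots,n-1\}$ into $O(d)$ groups, each of which forms a pairwise intersecting Minkowski arrangement, and then apply Theorem~\ref{thm:sashaimproved} to each group.

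First, I would establish a doubling bound on the radii: for all $1\le i<j\le n-1$, we have $\lambda_j\le 2\lambda_i$. Writing $\norm{\cdot}_K$ for the norm whose unit ball is $K$, the hypothesis ``$v_\ell\in\bd(v_k+\lambda_k K)$ whenever $k<\ell$'' reads $\norm{v_\ell-v_k}_K=\lambda_k$. Picking any $k$ with $j<k\le n$, the triangle inequality gives
\[
\lambda_j=\norm{v_k-v_j}_K\le\norm{v_k-v_i}_K+\norm{v_i-v_j}_K=\lambda_i+\lambda_i=2\lambda_i.
\]

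Next, set $M:=\max_{1\le i\le n-1}\lambda_i$ and partition $[n-1]$ into dyadic scales
\[
L_s:=\setbuilder{i\in[n-1]}{2^{-s-1}M<\lambda_i\le 2^{-s}M}\quad(s=0,1,2,\ldots).
\]
For any $i<j$ in a fixed $L_s$, writing $\mu:=2^{-s-1}M$, we have $\norm{v_j-v_i}_K=\lambda_i\in(\mu,2\mu]$, so the family $\setbuilder{v_i+\mu K}{i\in L_s}$ is a Minkowski arrangement (centers are at $K$-distance $>\mu$) which is pairwise intersecting (centers are at $K$-distance $\le 2\mu$). By Theorem~\ref{thm:sashaimproved}, $\abs{L_s}\le 2\cdot 3^d$.

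What remains, and is the main obstacle, is to show that only $O(d)$ of the scales $L_s$ are nonempty; this would immediately yield $n-1\le O(d)\cdot 2\cdot 3^d=O(3^d\cdot d)$. The geometric intuition is that each additional halving of the operative scale forces the subsequent points to cluster, in $\norm{\cdot}_K$, ever closer to a previously chosen point $v_i$, thereby localizing them onto an approximately $(d-1)$-dimensional tangential slice of $\bd K$ and consuming one dimension of freedom per halving; hence only $O(d)$ such halvings can be realized in $\Re^d$. Formalizing this in the present generality is delicate because $\bd K$ need not be smooth. Two natural routes I would attempt are: (i) round $K$ by its John ellipsoid, absorbing the resulting $\sqrt{d}$-distortion into the final $O(d)$ constant, and run a Euclidean angle/dimension argument on representatives $v^{(s)}\in L_s$ of consecutive nonempty scales; or (ii) adapt the packing/covering machinery underlying Theorem~\ref{thm:sashaimproved} directly to the multi-scale setting, bounding the scales and the within-scale counts in one combined covering argument rather than separately.
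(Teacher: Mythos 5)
Your opening moves are correct: the doubling bound $\lambda_j\leq 2\lambda_i$ for $1\leq i<j\leq n-1$ follows exactly as you describe (the paper uses it too), and your observation that a single dyadic scale $L_s$ produces a pairwise intersecting Minkowski arrangement of translates of $\mu K$ with $\mu=2^{-s-1}M$, hence $\abs{L_s}\leq 2\cdot 3^d$ by Theorem~\ref{thm:sashaimproved}, is valid.

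The gap is precisely the step you flag as ``the main obstacle'': that only $O(d)$ scales are nonempty. This is not a remaining technicality --- it carries essentially the entire content of the theorem, modulo the harmless $3^d$ factor. Nothing you have established rules out a sequence in which each $\lambda_i$ is roughly half its predecessor, so that every scale $L_s$ holds a single index; then the number of nonempty scales is $n-1$ and your plan bounds $n-1$ by $2\cdot 3^d(n-1)$, which is vacuous. To close the gap you would need an independent dimension-counting argument that a chain with $\norm{v_j-v_i}_K\asymp 2^{-s(i)}$ for $i<j$ admits only $O(d)$ distinct scales in $\Red$. The two routes you sketch (John-ellipsoid rounding, or ad hoc reuse of the packing machinery) are too vague to constitute a proof, and it is not clear they are simpler than proving the theorem directly.

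The paper takes a genuinely different route that avoids counting scales entirely. It lifts $v_i$ to $x_i=(\lambda_i^{-1}v_i,\lambda_i^{-1})\in\Re^{d+1}$, exactly as in Theorem~\ref{thm:lifting}, and partitions indices not by scale but by the residue of $\lfloor N\log_2\lambda_i\rfloor$ modulo $N+1$, giving only $N+1$ classes. Inside a class, the residue constraint together with the doubling bound forces $\lambda_j/\lambda_i<2^{1/N}$ whenever $i<j$; crucially, $\lambda_j$ may still be \emph{much smaller} than $\lambda_i$, so a single class can straddle many of your dyadic scales. Nevertheless the lifting argument produces, for each pair $i<j$ in the class, a functional $f$ with $\abs{f(x_k)}\leq 1$ for all $k$ and $f(x_j-x_i)>2-2^{1/N}$, which puts the lifted class in the setting of Theorems~\ref{thm:equivalence} and \ref{thm:closedonesidedbound} with $\lambda=2/(2-2^{1/N})$ and yields $\card{X_m}\leq(1+\lambda/2)(1+\lambda)^d$. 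Taking $N=d$ gives $\lambda=2+O(1/d)$, hence $\card{X_m}=O(3^d)$, and summing over the $d+1$ classes gives $n=O(3^d d)$. The quantity that needs control inside a class is the \emph{upward ratio} $\lambda_j/\lambda_i$ for $i<j$, not the total spread of the $\lambda_i$; the modular bucketing controls the former with only $O(d)$ buckets, whereas your scale count has no a priori bound.
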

The interest in this result is that it gives the upper bound $k^{O(3^d d)}$ to the cardinality of a set in a $d$-dimensional normed space in which only $k$ non-zero distances occur between pairs of points.
This is currently the best known upper bound if $k=\Omega(3^d d)$ (see \cite{S17} for a survey of this problem).

\section{Proof of Theorem~\ref{thm:sashaimproved}}

\begin{thm}\label{thm:lifting}
	Let $d\geq 1$. Suppose that there exists an 
	$o$-symmetric convex body $K$ in $\Red$ which has a pairwise intersecting 
	Minkowski arrangement of $n$ homothets.
	Then there exists a set $\{x_1,\dots,x_n\}$ of $n$ points in $\Re^{d+1}$ such that $o\notin \conv\{x_1,\ldots,x_n\}$, and for any distinct $i,j\in[n]$, $i<j$, there exists a non-zero linear functional $f_{ij}\colon\Re^{d+1}\to\Re$ with
\begin{equation}\label{eq:eq0}
    \abs{f_{ij}(x_k)}\leq \abs{f_{ij}(x_i)-f_{ij}(x_j)}\quad\text{for all $k\in[n]$.}
\end{equation}
\end{thm}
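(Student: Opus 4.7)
The plan is to use the projective-style lift
\[
x_i := (v_i/\lambda_i,\;1/\lambda_i)\in\Redp.
\]
Since every $x_i$ has positive last coordinate $1/\lambda_i$, the upper half-space $\{t>0\}$ separates all of them from the origin, giving $o\notin\conv\{x_1,\ldots,x_n\}$ at once. The real work is to produce, for each pair $i<j$, a nonzero linear functional $f_{ij}$ satisfying \eqref{eq:eq0}.

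I would build $f_{ij}$ from two classical ingredients: a Minkowski separation in $\Red$ and Helly's theorem on the real line. The Minkowski hypothesis applied to both orderings of the pair yields $\|v_j-v_i\|_K\ge\max(\lambda_i,\lambda_j)$, so the duality $\|v\|_K=\max_{h_K(u)\le 1}\langle u,v\rangle$ supplies some $u\in\Red$ with $h_K(u)=1$ and $\langle u,v_j-v_i\rangle\ge\max(\lambda_i,\lambda_j)$. Projecting every homothet onto the line $\Re u$ turns $\lambda_k K+v_k$ into the interval $I_k=[\langle u,v_k\rangle-\lambda_k,\;\langle u,v_k\rangle+\lambda_k]$, and pairwise intersection of the homothets descends to pairwise intersection of the intervals $I_k$. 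One-dimensional Helly then provides a common point $c\in\bigcap_{k=1}^n I_k$, and I set
\[
f_{ij}(y,t)\;:=\;\langle u,y\rangle-c\,t,
\]
which is nonzero because $u\neq 0$.

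Verification splits into a uniform upper bound and a single-pair lower bound. The membership $c\in I_k$ gives $|f_{ij}(x_k)|=|\langle u,v_k\rangle-c|/\lambda_k\le 1$ for every $k$. For the pair itself, $c\in I_i\cap I_j=[\langle u,v_j\rangle-\lambda_j,\;\langle u,v_i\rangle+\lambda_i]$ combined with $\langle u,v_j-v_i\rangle\ge\max(\lambda_i,\lambda_j)$ forces $c\in[\langle u,v_i\rangle,\langle u,v_j\rangle]$, so $f_{ij}(x_i)\le 0\le f_{ij}(x_j)$ and
\[
|f_{ij}(x_i)-f_{ij}(x_j)|\;=\;\frac{c-\langle u,v_i\rangle}{\lambda_i}+\frac{\langle u,v_j\rangle-c}{\lambda_j}.
\]
Writing $M=\langle u,v_j-v_i\rangle$, this expression is affine in $c$ and at the two endpoints of $I_i\cap I_j$ evaluates to $1+(M-\lambda_j)/\lambda_i$ and $1+(M-\lambda_i)/\lambda_j$, both $\ge 1$. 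Hence $|f_{ij}(x_i)-f_{ij}(x_j)|\ge 1\ge\max_k|f_{ij}(x_k)|$, which is \eqref{eq:eq0}.

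The main, mild, obstacle is spotting the right lift. The projective choice $x_i=(v_i/\lambda_i,1/\lambda_i)$ is precisely what converts $f_{ij}(y,t)=\langle u,y\rangle-ct$ into the normalised signed $u$-distance $(\langle u,v_k\rangle-c)/\lambda_k$, and this identification is what lets a single Helly common point $c$ act simultaneously on every $k$. Once the lift is in place, Minkowski supplies the direction $u$, Helly on the line supplies the level $c$, and the two remaining inequalities are a short computation.
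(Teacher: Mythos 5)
Your proof is correct and follows essentially the same route as the paper: the same lift $x_i=(v_i/\lambda_i,1/\lambda_i)$, the same choice of a supporting functional of $K$ in direction $v_j-v_i$, the same application of one-dimensional Helly to the projected intervals, and the same $f_{ij}=(\fhi,-\alpha)$. The only cosmetic difference is at the very end: where the paper bounds $f(x_j-x_i)$ by replacing both denominators with $\max\{\lambda_i,\lambda_j\}$, you observe that the expression is affine in the Helly point $c$ and check the two endpoints of $I_i\cap I_j$; both computations are valid and comparably short.
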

We remark that the converse of the above theorem does not hold.
For a simple counterexample, let $\{x_1,\dots,x_5\}$ be the vertex set of a regular pentagon, with $o$ just outside the pentagon, close to the midpoint of an edge.
It is easy to see that for any pair $x_i, x_j$ of vertices there is a line through $o$ such that the projections $\pi(x_k)$ of the vertices onto the line are all within distance $\abs{\pi(x_i)-\pi(x_j)}$ of $o$.
On the other hand, it is also easy to see that a pairwise intersecting Minkowski arrangement of intervals in $\Re$ can have at most two members.

The above remark is to be contrasted with the equivalence in the following result, which generalizes part of Theorem~1.4 of \cite{LN09}.
\begin{thm}\label{thm:equivalence}
	Given $\lambda\geq1$, and $D\in\Ze, D\geq 1$. Then the following statements 
are equivalent.
	\begin{enumerate}[(i)]
		\item\label{item:equivalncepoints} 
		There exists a set $\{x_1,\ldots,x_n\}$ of $n$ points in $\ReD$, 
such that $o \notin \conv\{x_1,\ldots,x_n\}$, and for any distinct $i,j\in [n], i< j$ there exists a non-zero linear 
functional $f_{ij}:\ReD\to\Re$ with 
\begin{equation}\label{eq:eq1}
 \abs{f_{ij}(x_k)}\leq \frac{\lambda}{2}\abs{f_{ij}(x_i)-f_{ij}(x_j)}\quad\text{for all $k\in[n]$.}
\end{equation}
		\item\label{item:equivalncetranslates}
		There is an $o$-symmetric convex set $L$ in $\ReD$ that has $n$ 
non-overlapping translates $L+t_1,\ldots, L+t_n$, each intersecting 
$(\lambda-1)L$, with $o \notin \conv\{t_1,\ldots,t_n\}$.
\end{enumerate}
\end{thm}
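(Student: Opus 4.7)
The plan is to realize that the points $x_i$ in (\ref{item:equivalncepoints}) and the translation vectors $t_i$ in (\ref{item:equivalncetranslates}) should be taken to be the same, and that the set $L$ and the functionals $f_{ij}$ are dual to one another: $L$ is a polyhedral intersection of slabs defined by the $f_{ij}$ (suitably normalized), while conversely the $f_{ij}$ arise as supporting functionals to $2L$ at the vectors $t_i - t_j$. Both directions then reduce to direct computation.

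For (\ref{item:equivalncetranslates})$\Rightarrow$(\ref{item:equivalncepoints}) I would set $x_i := t_i$; the convex-hull condition transfers verbatim. Fix $i<j$. Interior-disjointness of $L+t_i$ and $L+t_j$, combined with $L=-L$, gives $t_i - t_j \notin \interior(L-L) = \interior(2L)$, so by the supporting hyperplane theorem there is a non-zero linear functional $f_{ij}$ with $f_{ij}(t_i-t_j) \geq 2 \sup_{y\in L} f_{ij}(y)$. Symmetry of $L$ then yields $|f_{ij}(y)| \leq \sup_{z\in L} f_{ij}(z)$ on $L$. From $(L+t_k)\cap(\lambda-1)L \neq \emptyset$ one reads off $t_k \in (\lambda-1)L + L = \lambda L$ (the identity holding for $\lambda \geq 1$ by convexity and $o\in L$), hence $|f_{ij}(t_k)| \leq \lambda \sup_{y\in L}f_{ij}(y) \leq \tfrac{\lambda}{2}|f_{ij}(t_i)-f_{ij}(t_j)|$, which is (\ref{eq:eq1}).

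For (\ref{item:equivalncepoints})$\Rightarrow$(\ref{item:equivalncetranslates}) I would set $t_i := x_i$ and
\[
L := \bigl\{\, y \in \ReD \st |f_{ij}(y)| \leq \tfrac{1}{2}|f_{ij}(x_i) - f_{ij}(x_j)| \text{ for all } i<j \,\bigr\},
\]
a closed $o$-symmetric convex set containing $o$ (an intersection of symmetric slabs). Inequality (\ref{eq:eq1}) then reads exactly as $x_k \in \lambda L$ for every $k$, and via $\lambda L = (\lambda-1)L + L$ this gives $(L+t_k)\cap(\lambda-1)L \neq \emptyset$. For each $i<j$, any $z \in \interior(2L)$ lies strictly inside each defining slab, in particular $|f_{ij}(z)| < |f_{ij}(x_i)-f_{ij}(x_j)|$; since $|f_{ij}(x_i-x_j)| = |f_{ij}(x_i)-f_{ij}(x_j)|$, we conclude $t_i-t_j \notin \interior(2L)$, so $L+t_i$ and $L+t_j$ are interior-disjoint. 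The convex-hull condition is the hypothesis on the $x_i$'s.

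The only genuine design choice is the construction of $L$ in (\ref{item:equivalncepoints})$\Rightarrow$(\ref{item:equivalncetranslates}): normalizing each functional against $\tfrac12|f_{ij}(x_i)-f_{ij}(x_j)|$ is exactly what makes (\ref{eq:eq1}) simultaneously encode ``$x_k \in \lambda L$'' (and hence that each translate meets $(\lambda-1)L$) and tight separation of $x_i - x_j$ from $\interior(2L)$. Once $L$ is chosen, everything else is bookkeeping with Minkowski sums and $o$-symmetry; the only identity worth isolating is $(\lambda-1)L + L = \lambda L$ for $\lambda\geq 1$, which is a one-line consequence of convexity and $o\in L$.
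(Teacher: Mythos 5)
Your proof is correct and takes essentially the same route as the paper. In (i)$\Rightarrow$(ii), the paper intersects the slabs with the $\tfrac{\lambda}{2}$ factor to get a set $C$ and then sets $L=\tfrac{1}{\lambda+1}C$, $t_i=\tfrac{\lambda}{\lambda+1}x_i$, which is your construction (with the $\tfrac12$ factor and $t_i=x_i$) rescaled by $\tfrac{\lambda+1}{\lambda}$; in (ii)$\Rightarrow$(i), the paper works directly with a functional separating the non-overlapping translates $L+t_i$ and $L+t_j$, which is the same functional you obtain by supporting $2L$ at $t_i-t_j$, and both proofs then deduce $|f(t_k)|\leq\tfrac{\lambda}{2}\cdot\text{(width of }f(2L))\leq\tfrac{\lambda}{2}|f(t_i)-f(t_j)|$.
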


We note that the equivalence between (ii) and (iv) of Theorem~1.4 in \cite{LN09} is exactly the above theorem in the case $\lambda=1$.

\begin{thm}\label{thm:closedonesidedbound}
Let $K$ be an $o$-symmetric convex set in $\ReD$ with $D\geq 2$, and let
$\alpha K+t_1,\ldots,\alpha K+t_n$ be $n$ non-overlapping translates of $\alpha 
K$ with 
$\alpha>0$ such that each translate intersects $K$, and $o \notin 
\inter(\conv\{t_1,\ldots,t_n\})$.
Then
	\begin{equation}\label{eq:Halpha}
		n \leq \frac{(1+2\alpha)^{D-1}(1+3\alpha)}{2\alpha^D}.
	\end{equation}
\end{thm}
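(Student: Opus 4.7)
The strategy is a hyperplane separation combined with a volumetric packing estimate on a one-sided region. Since $o\notin\inter(\conv\{t_1,\dots,t_n\})$, separation produces a unit vector $u\in\Re^{D}$ with $\langle u,t_i\rangle\ge 0$ for every $i$; rescale so that $h_K(u):=\max_{k\in K}\langle u,k\rangle=1$. Because each translate $\alpha K+t_i$ meets the $o$-symmetric body $K$, the center satisfies $t_i\in K+\alpha K=(1+\alpha)K$, so every translate lies in the one-sided region
\[
R:=(1+2\alpha)K\cap\{x\in\Re^{D}:\langle u,x\rangle\ge -\alpha\}.
\]
Non-overlap of the translates gives the packing inequality $n\alpha^{D}\vol(K)\le \vol(R)$, reducing the problem to an estimate of $\vol(R)$.

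Introduce the cross-section area $A(t):=\vol_{D-1}(K\cap\{\langle u,x\rangle=t\})$, which is even in $t$ (as $K=-K$) and satisfies $\int_{-1}^{1}A\,dt=\vol(K)$; by Brunn's theorem, $A^{1/(D-1)}$ is concave. Cavalieri's principle, combined with the evenness of $A$, yields
\[
\vol(R)=(1+2\alpha)^{D}\left[\tfrac12\vol(K)+\int_{0}^{\alpha/(1+2\alpha)}A(s)\,ds\right].
\]
Rearranging, the desired bound $n\le\frac{(1+3\alpha)(1+2\alpha)^{D-1}}{2\alpha^{D}}$ is equivalent to the integral inequality
\[
(1+2\alpha)\int_{0}^{\alpha/(1+2\alpha)}A(s)\,ds\;\le\;\tfrac{\alpha}{2}\vol(K).
\]

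The main obstacle, as I see it, is precisely this last inequality. It becomes an equality when $K$ is a cube with $u$ parallel to an edge direction (which matches the known tight configuration of $2\cdot 3^{D-1}$ translates of the cube in the $\alpha=1$ case), so the bound is sharp. My first attempt would be to establish the inequality directly from the concavity of $A^{1/(D-1)}$ together with the evenness of $A$, by comparing averages of $A$ on $[0,\alpha/(1+2\alpha)]$ with the total integral on $[0,1]$. If that route stalls for general symmetric $K$, my fallback is a slicing argument: set $n(s):=|\{i:|\langle u,t_i\rangle-s|\le\alpha\}|$, so that $\int_{-\alpha}^{1+2\alpha}n(s)\,ds=2\alpha n$; at each slice the non-overlapping $(D-1)$-dimensional cross-sections of the translates of $\alpha K$ must fit inside the $(D-1)$-dimensional cross-section of $(1+2\alpha)K$. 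A Hadwiger-type estimate in dimension $D-1$ bounds $n(s)\le\bigl((1+2\alpha)/\alpha\bigr)^{D-1}$ at each $s$, and integrating over the interval of length $1+3\alpha$ produces exactly $n\le\frac{(1+3\alpha)(1+2\alpha)^{D-1}}{2\alpha^{D}}$.
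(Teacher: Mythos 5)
Your setup is the same as the paper's: separate with $u$, normalize so $h_K(u)=1$, deduce $t_i\in(1+\alpha)K$ and hence $\alpha K+t_i\subseteq R:=(1+2\alpha)K\cap\{\langle u,x\rangle\ge-\alpha\}$, and reduce to $n\alpha^D\vol_D(K)\le\vol_D(R)$. Up to this point the argument is correct. The problem is that neither of the two ways you propose to close the gap can work, because the pure volume bound $\vol_D(R)$ is simply too large for non-cube bodies, and both of your routes are attempts to salvage it.

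For the first route, the inequality you isolate, $(1+2\alpha)\int_0^{\beta}A\,ds\le\tfrac{\alpha}{2}\vol_D(K)$ with $\beta=\alpha/(1+2\alpha)$, is equivalent (using $\tfrac12\vol_D(K)=\int_0^1 A$) to asking that the \emph{average} of $A$ over $[0,\beta]$ be at most the \emph{integral} of $A$ over $[0,1]$. For peaked section functions this is false: take $K$ the crosspolytope and $u$ a coordinate direction, so $A(s)\propto(1-|s|)^{D-1}$; then $\int_0^1A=A(0)/D$, while $\frac1\beta\int_0^\beta A\to A(0)$ as $\beta\to 0$. So for small $\alpha$ the inequality fails by a factor close to $D$. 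Concavity of $A^{1/(D-1)}$ plus evenness cannot rescue it, since the crosspolytope satisfies both.

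The fallback route requires the pointwise bound $n(s)\le\bigl((1+2\alpha)/\alpha\bigr)^{D-1}$, but the cross-sections $(\alpha K+t_i)\cap h(s)$ are slices of $\alpha K$ at different relative heights, not translates of a single $(D-1)$-dimensional body, and can be arbitrarily small; a Hadwiger-type translate-packing estimate does not apply. It fails already for $D=2$, $\alpha=1$, $K$ the unit disk: the centers $(0,0)$, $(\pm2,0)$, $(1,\sqrt3)$ are pairwise at distance $\ge2$, all lie in $2K$ with $\langle u,t_i\rangle\ge0$ and $o$ on the boundary of their hull, yet all four circles meet $h(s)$ for any $s\in(\sqrt3-1,1)$, giving $n(s)=4>3$.

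What the paper does, and what your proposal is missing, is the treatment of the slab $[-\alpha,0]$ via the monotonicity lemma (Lemma~1, from Bezdek--Brass via L\'angi--Nasz\'odi): for $f(x)=\vol_{D-1}(\alpha K\cap h(x-\alpha))$, the ratio $F(y)=\frac{1}{f(y)}\int_0^y f$ is increasing, so each translate dipping below $h(0)$ satisfies $\int_0^{\alpha-a_i}f\le\int_0^\alpha f\cdot\frac{f(\alpha-a_i)}{f(\alpha)}$. The quantities $f(\alpha-a_i)$ are the areas of the non-overlapping sections $(\alpha K+t_i)\cap h(0)$, and summing them inside $(1+2\alpha)K\cap h(0)$ gives the bound $\frac{\alpha(1+2\alpha)^{D-1}}{2}\vol_D(K)$ for the part of the integral over $[-\alpha,0]$. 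This uses the non-overlap of the translates \emph{at the specific slice $h(0)$}, which is precisely the information that the crude bound by $\vol_D(R)$ throws away.
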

This theorem is a slight modification of Theorem~1.5 
of \cite{LN09}. There the translates of $\alpha K$ touch $K$, whereas here 
they 
may overlap with $K$.
Theorem~\ref{thm:closedonesidedbound} is sharp for $\alpha=1$. Indeed, let $K$ 
be the cube $[-1,1]^D$, and consider the $2\cdot3^{D-1}$ translation vectors 
$\{t\in\{-2,0,2\}^D\st t^{(1)}\geq t^{(2)}\}$.

Combining Theorems \ref{thm:lifting}, \ref{thm:equivalence} and 
\ref{thm:closedonesidedbound} (with $\lambda=2$, $K=(\lambda-1)L=L$, 
$\alpha=\frac{1}{\lambda-1}=1$), we immediately obtain 
Theorem~\ref{thm:sashaimproved}.

\section{Proof of Theorem~\ref{thm:lifting}}
Let the Minkowski arrangement by $\setbuilder{v_i+\lambda_i K}{i\in[n]}$, where $\lambda_i>0$ and $v_i\in\Red$ for each $i\in[n]$.
Let $x_i=(\lambda_i^{-1}v_i,\lambda_i^{-1})\in\Red\times\bR$, $i\in[n]$.
Fix distinct $i,j\in\{1,\dots,n\}$.
We will find a linear $f\colon\Red\times\bR\to\bR$ that satisfies \eqref{eq:eq0}.
Let $\fhi\colon\Red\to\bR$ be a linear functional such that 
$\fhi(x)\leq\norm{x}_K$ for all $x\in\Red$ and $\fhi(v_j-v_i)=\norm{v_j-v_i}_K$.
(Thus, $\fhi^{-1}(1)$ is a hyperplane that supports $K$ at 
$\norm{v_j-v_i}_K^{-1}(v_j-v_i)$.)

Since any two homothets $v_k+\lambda_k K$ and $v_\ell+\lambda_\ell K$ intersect, 
any two of the compact intervals $\fhi(v_k+\lambda_kK)$ and 
$\fhi(v_\ell+\lambda_\ell K)$ intersect in $\bR$.
By Helly's Theorem in $\bR$, there exists 
$\alpha\in\bigcap_{t=1}^n\fhi(v_t+\lambda_t K)$.
Since $\fhi(v_i+\lambda_i K)=[\fhi(v_i)-\lambda_i,\fhi(v_i)+\lambda_i]$ and 
$\fhi(v_j+\lambda_j K)=[\fhi(v_j)-\lambda_j,\fhi(v_j)+\lambda_j]$, we have
\[ \fhi(v_j)-\lambda_j\leq \alpha\leq\fhi(v_i)+\lambda_i. \]
By the Minkowski property,
\[ \fhi(v_j-v_i)=\norm{v_j-v_i}_K\geq\max\{\lambda_i,\lambda_j\}.\]
It follows that
\begin{equation}\label{eq2}
\fhi(v_i)\leq \alpha\leq \fhi(v_j).
\end{equation}
We set $f=(\fhi,-\alpha)\in(\Red\times\bR)^*$, that is, define $f(x)=\fhi(v)-\alpha\mu$, where 
$x=(v,\mu)\in\Red\times\bR$.
We show that $f(x_j-x_i)\geq 1$, and $\abs{f(x_k)}\leq 1$ for all 
$k\in\{1,\dots,n\}$.
This will show that \eqref{eq:eq0} is satisfied, which will finish 
the proof.
\begin{align*}
f(x_j-x_i) &= \fhi(\lambda_j^{-1} v_j-\lambda_i^{-1} v_i) - 
\alpha(\lambda_j^{-1}-\lambda_i^{-1})\\
&= \frac{\fhi(v_j)-\alpha}{\lambda_j} + \frac{\alpha-\fhi(v_i)}{\lambda_i} \\
&\stackrel{\eqref{eq2}}{\geq} \frac{\fhi(v_j) - \alpha + \alpha - 
\fhi(v_i)}{\max\{\lambda_i,\lambda_j\}}\\ 
&= \frac{\norm{v_j-v_i}_K}{\max\{\lambda_i,\lambda_j\}} \geq 1.
\end{align*}
Since $\alpha\in\fhi(v_k+\lambda_k K)$, there exists $x\in K$ such that 
$\fhi(v_k+\lambda_k x)=\alpha$.
Therefore,
\[ \abs{f(x_k)} = \abs{\fhi(\lambda_k^{-1}v_k)-\alpha\lambda_k^{-1}} = 
\abs{\fhi(x)}\leq\norm{x}_K\leq 1. \qedhere\]
\qed

\section{Proof of Theorem~\ref{thm:sasha2}}
The following proof is very similar to the proof of Theorem~\ref{thm:lifting}.

Without loss of generality, $\min_i\lambda_i=1$.
Denote the unit ball of $\norm{\cdot}$ by $K$.
Let $x_i=(\lambda_i^{-1}v_i,\lambda_i^{-1})\in\bR^d\times\bR$, $i=0,\dots,n-1$.
Let $N\geq 1$, to be fixed later.
For each $m=0,\dots,N$, let 
\[ X_m = \setbuilder{x_i}{\lfloor N\log_2\lambda_i\rfloor\equiv m\pmod{N+1}} .\]
Then $X_0,\dots,X_N$ partition $\{x_0,\dots,x_{n-1}\}$ into $N+1$ parts.
Fix $i,j\in X_m$ such that $0\leq i<j < n$.
We will find a linear $f\colon\bR^d\times\bR\to\bR$ such that \eqref{eq:eq1} is satisfied for all $x_k\in X_m$ and $\lambda=2-2^{1/N}$.
Let $\fhi\colon\bR^d\to\bR$ be a linear functional such that $\fhi(x)\leq\norm{x}$ for all $x\in\bR^d$ and
\begin{equation}\label{2.0}
\fhi(x_j-x_i)=\norm{v_j-v_i}=\lambda_i.
\end{equation}
(Thus, $\fhi^{-1}(1)$ is a hyperplane that supports $K$ at $\norm{v_j-v_i}_K^{-1}(v_j-v_i)$.)

Since any two homothets $v_k+\lambda_k K$ and $v_\ell+\lambda_\ell K$ intersect in their interiors, any two of the open intervals $\fhi(v_k+\lambda_k\interior K)$ and $\fhi(v_\ell+\lambda_\ell\interior K)$ intersect in $\bR$.
By Helly's Theorem in $\bR$, there exists $\alpha\in\bigcap_{t=1}^n\fhi(v_t+\lambda_t\interior  K)$.
Since $\fhi(v_i+\lambda_i\interior K)=(\fhi(v_i)-\lambda_i,\fhi(v_i)+\lambda_i)$ and $\fhi(v_j+\lambda_j\interior K)=(\fhi(v_j)-\lambda_j,\fhi(v_j)+\lambda_j)$, we have
\[ \fhi(v_j)-\lambda_j < \alpha < \fhi(v_i)+\lambda_i. \]
By \eqref{2.0}, we can rewrite this as
\begin{equation}\label{2.1}
-\lambda_i < \fhi(v_i)-\alpha<\lambda_j-\lambda_i.
\end{equation}
We set $f=(\fhi,-\alpha)\in(\bR^d\times\bR)^*$, that is, for $x=(v,\mu)\in\bR^d\times\bR$, we let $f(x)=\fhi(v)-\alpha\mu$.
It remains to show that $f(x_j-x_i) > 2-2^{1/N}$, and $\abs{f(x_k)}\leq 1$ for all $k\in\{0,\dots,n\}$, since this will show that \eqref{eq:eq1} is satisfied with $\lambda=2-2^{1/N}$.
By applying Theorems~\ref{thm:equivalence} and 
\ref{thm:closedonesidedbound} with $\lambda=2/(2-2^{1/N})=2+\frac{\log 4}{N}+O(N^{-2})$, $K=(\lambda-1)L$ and $\alpha=1/(\lambda-1)=2^{1-1/N}-1$, we  obtain $\card{X_m}\leq (1+\lambda/2)(1+\lambda)^d$, and
it follows that \[n\leq (N+1)(1+\lambda/2)(1+\lambda)^d.\]
If we choose $N=d$, we obtain $\lambda=2+\frac{\log 4}{d} + O(d^{-2})$ and $n = 3^dO(d)$, which would finish the proof.

By definition of $X_m$,
\[ \lfloor N\log_2\lambda_j\rfloor-\lfloor N\log_2\lambda_i\rfloor = kN\quad\text{for some $k\in\Ze$.}\]
If $k\geq 1$, then $N\log_2\lambda_j-N\log_2\lambda_i>N$, hence $\lambda_j/\lambda_i>2$.
However, we also have
\[ \lambda_i=\norm{v_i-v_j}\geq\norm{v_j-v_n}-\norm{v_n-v_i}=\lambda_j-\lambda_i,\]
a contradiction.
Therefore, $k\leq 0$, that is,
$\lfloor N\log_2\lambda_j\rfloor-\lfloor N\log_2\lambda_i\rfloor \leq 0$.
This gives $N\log_2\lambda_j-N\log_2\lambda_i < 1$ and
\begin{equation}\label{2.2}
\frac{\lambda_j}{\lambda_i} < 2^{1/N}.
\end{equation}
It follows that
\begin{align*}
f(x_j-x_i) &= \fhi(\lambda_j^{-1} v_j-\lambda_i^{-1} v_i) - \alpha(\lambda_j^{-1}-\lambda_i^{-1})\\
&= \frac{\fhi(v_j)-\alpha}{\lambda_j} + \frac{\alpha-\fhi(v_i)}{\lambda_i} \\
&= \frac{\fhi(v_i)+\lambda_i-\alpha}{\lambda_j} + \frac{\alpha-\fhi(v_i)}{\lambda_i} \\
&\stackrel{\eqref{2.1}, \eqref{2.2}}{>} \frac{2^{-1/N}(\fhi(v_i)+\lambda_i - \alpha) + \alpha - \fhi(v_i)}{\lambda_i}\\ 
&= 2^{-1/N} + \frac{(1-2^{-1/N})(\alpha-\fhi(v_i))}{\lambda_i}\\
&\stackrel{\eqref{2.1}}{>} 2^{-1/N} +\frac{(1-2^{-1/N})(\lambda_i-\lambda_j)}{\lambda_i}\\ 
&= 1 - (1-2^{-1/N})\frac{\lambda_j}{\lambda_i}\\
&\stackrel{\eqref{2.1}}{>} 1 - (1-2^{-1/N})2^{1/N}\\ 
&= 2-2^{1/N}.
\end{align*}
Since $\alpha\in\fhi(v_k+\lambda_k K)$, there exists $x\in K$ such that $\fhi(v_k+\lambda_k x)=\alpha$.
Therefore,
\[ \abs{f(x_k)} = \abs{\fhi(\lambda_k^{-1}v_k)-\alpha\lambda_k^{-1}} = \abs{\fhi(x)}\leq\norm{x}_K\leq 1. \qedhere\]
\qed

\section{Proof of Theorem~\ref{thm:equivalence}}
Assume that \eqref{item:equivalncepoints} holds. Let $C:=\bigcap_{i\neq 
j}S_{ij}$ be the intersection of 
the $o$-symmetric slabs $S_{ij}:=\left\{p\in\ReD :  \abs{f_{ij}(p)}\leq 
\frac{\lambda}{2}\abs{f_{ij}(x_i)-f_{ij}(x_j)}\right\}$.
By assumption, $C\supseteq\{x_1,\ldots,x_n\}$.
For each $i\in[n]$, let $C_i:=\frac{\lambda x_i+C}{\lambda+1}$ be the 
homothetic 
copy of $C$ with center of homothety $x_i$, and of ratio $\frac{1}{\lambda+1}$.
It is an easy exercise that the $C_i$s are non-overlapping. Moreover, by the 
symmetry of $C$, we have $\frac{\lambda-1}{\lambda+1}x_i\in C_i\cap 
\frac{\lambda-1}{\lambda+1}C$. Thus, for $L:=\frac{1}{\lambda+1}C$, and 
$t_i:=\frac{\lambda}{\lambda+1}x_i$, \eqref{item:equivalncetranslates} holds as 
promised.

Next, assume that \eqref{item:equivalncetranslates} holds. Fix $i,j\in[n], 
i\neq j$. Since $L+t_i$ and $L+t_j$ are non-overlapping, there is a linear 
functional $f$ such that the two real intervals $s_i:=f(L+t_i)$ and 
$s_j:=f(L+t_i)$ do not overlap. These two intervals are of equal length, which 
we denote by $w$. Thus, we have
\begin{equation}\label{eq:wnotbig}
w\leq\abs{f(t_i)-f(t_j)}. 
\end{equation}

On the other hand, $s_k:=f(L+t_k)$ is also a real interval of length 
$w$ for any $k\in[n]$; and $s_0:=f((\lambda-1)L)$ is a 
0-symmetric real interval of length $(\lambda-1)w$, which intersects each 
$s_k$. 
Thus, for the center $f(t_k)$ of $s_k$, we have $\abs{f(t_k)}\leq 
\frac{(\lambda-1)w}{2}+\frac{w}{2}=\frac{\lambda w}{2}$. Now,
\eqref{eq:wnotbig} yields $\abs{f(t_k)}\leq \frac{\lambda}{2}\abs{f(t_i)-f(t_j)}$.
Thus, we may set $f_{ij}:=f$. This argument is valid for any $i$ and $j$, thus, 
with $x_i:=t_i$, we obtain \eqref{item:equivalncepoints}.

\section{Proof of Theorem~\ref{thm:closedonesidedbound}}
The proof is an almost verbatim copy of the proof of Theorem~1.5 of 
\cite{LN09}. There are two points of difference, which we will note.

We recall Lemma~3.1. of \cite{LN09}, which is a slightly more general version 
of the Lemma of \cite{BB03}.

\begin{lem}\label{lem:monotonicity}
	Let $f$ be a function on $[0,1]$ with the properties $f(0)\geq 0$, $f$ 
is positive and monotone increasing on $(0,1]$,
	and $f(x) = (g(x))^k$ for some concave function $g$ and $k > 0$. Then
	\[
		F(y) := \frac{1}{f(y)} \Int_0^y f(x) \dx
	\]
	is strictly increasing on $(0,1]$.
\end{lem}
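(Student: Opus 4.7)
The plan is to show that $F'(y)>0$ wherever $g$ is differentiable, and then promote this to strict monotonicity of $F$ via absolute continuity. Concavity of $g$ yields differentiability off an at most countable set, and $g>0$ on $(0,1]$ transfers this to $f=g^k$. At any differentiability point $y\in(0,1]$,
\[ F'(y)=1-\frac{f'(y)}{f(y)^2}\int_0^y f(x)\,dx, \]
and plugging in $f=g^k$ and $f'=k\,g^{k-1}g'$ reduces the desired inequality $F'(y)>0$ to
\[ k\,g'(y)\int_0^y g(x)^k\,dx < g(y)^{k+1}. \]

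The key step is the tangent-line bound from concavity: $g(x)\leq g(y)+g'(y)(x-y)$ on $[0,y]$, and this linear upper bound is non-negative throughout $[0,y]$, because at $x=0$ it equals $g(y)-y\,g'(y)\geq g(0)\geq 0$ (which is the tangent-line inequality at $0$, hence a consequence of concavity). Raising to the $k$-th power preserves the inequality, and when $g'(y)>0$ the substitution $u=g(y)+g'(y)(x-y)$ evaluates the majorizing integral explicitly to $\frac{g(y)^{k+1}-(g(y)-y\,g'(y))^{k+1}}{(k+1)\,g'(y)}$. Multiplying through by $k\,g'(y)$ and discarding the non-negative second term in the numerator yields
\[ k\,g'(y)\int_0^y g(x)^k\,dx \leq \tfrac{k}{k+1}\,g(y)^{k+1} < g(y)^{k+1}. \]
The degenerate case $g'(y)=0$ is immediate, since then $f'(y)=0$ and $F'(y)=1$.

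To upgrade this a.e.\ positivity of the derivative to strict monotonicity, I observe that on any interval $[a,1]\subset(0,1]$ the concave function $g$ is Lipschitz (its one-sided derivatives exist at every point of $[a,1]$ and are bounded), so $f=g^k$ is Lipschitz there, $1/f$ is Lipschitz, and hence $F$ is absolutely continuous on $[a,1]$. Consequently $F(y_2)-F(y_1)=\int_{y_1}^{y_2}F'(x)\,dx>0$ for all $0<y_1<y_2\leq 1$, which is the required strict increase on $(0,1]$.

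The main technical obstacle I anticipate is dealing with the countable set of non-differentiability points of $g$ and the behaviour of $F$ near the endpoint $0$; both are cleanly bypassed by the absolute continuity argument above, which only requires $F'>0$ almost everywhere on each compact subinterval of $(0,1]$.
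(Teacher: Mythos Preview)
The paper does not prove this lemma; it merely quotes it as Lemma~3.1 of \cite{LN09} (itself a variant of a lemma in \cite{BB03}), so there is no proof here to compare against. Your argument is correct: the derivative computation, the reduction to $k\,g'(y)\int_0^y g(x)^k\,\diff x < g(y)^{k+1}$, and the concavity tangent-line estimate are all sound, and your care in passing from $F'>0$ a.e.\ to strict monotonicity via absolute continuity on compact subintervals of $(0,1]$ is appropriate and correctly executed. One cosmetic remark: the bound $g(y)-y\,g'(y)\geq g(0)$ is the tangent line \emph{at $y$} evaluated at $x=0$, not ``the tangent-line inequality at $0$'' as you phrase it; also, your chain of inequalities actually yields the uniform lower bound $F'(y)\geq\frac{1}{k+1}$ at every differentiability point of $g$, which makes the concluding absolute-continuity step even more immediate.
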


\begin{proof}[Proof of Theorem~\ref{thm:closedonesidedbound}.]
Clearly, we may assume that $K$ is bounded, otherwise, by a projection, we can 
reduce the dimension.
Let $\alpha K + t_1$, $\alpha K+t_2, \ldots, \alpha K + t_n$ be pairwise 
non-overlapping translates of $\alpha K$ that intersect $K$. By the assumptions 
of the theorem, there is a non-zero vector $v\in\ReD$ such that $a_i :=\langle 
t_i,v \rangle \geq 0$ for $i\in[n]$. Set $h(x) := \{ p \in \ReD : \langle p,v 
\rangle = x\}$. Without loss of generality, we may assume that $h(-1)$ and 
$h(1)$ are supporting hyperplanes of $K$.

Clearly, $\alpha K + t_i$ is between $h(-\alpha)$ and $h(1+2\alpha)$,
and it is contained in $(1+2\alpha)K$, for $i\in[n]$.

\begin{equation}\label{eq:wholealpha}
	\Int_{-\alpha}^{1+2\alpha} \vol_{D-1}\left( \left(\bigcup_{i=1}^n 
\alpha K + t_i\right) \cap h(x) \right) \dx= n \alpha^D \vol_D(K).
\end{equation}

\begin{equation}\label{eq:upperbound1alpha}
	\Int_0^{1+2\alpha} \vol_{D-1}\left( \left(\bigcup_{i=1}^n \alpha K + 
t_i\right) \cap h(x) \right) \dx
\end{equation}
\[\leq	\Int_0^{1+2\alpha} \vol_{D-1}\left( (1+2\alpha)K \cap h(x) \right) \dx 
	= \frac{(1+2\alpha)^{d}}{2} \vol_D(K).
\]

We note that this was the first point of difference from the proof in 
\cite{LN09}: here, we do not subtract the contribution of $K$ in the total 
volume on the right hand side of the inequality.

Set $f(x) := \vol_{D-1}\left( \alpha K \cap h(x-\alpha) \right)$, and observe 
that the conditions of Lemma~\ref{lem:monotonicity}
are satisfied by $f$ (with $k=D-1$, by the Brunn--Minkowski inequality). We may 
assume that $a_1,\dots,a_m\leq \alpha < a_{m+1},\dots,a_n$.
By Lemma~\ref{lem:monotonicity},
\begin{eqnarray*}
	& & \Int_{-\alpha}^0 \vol_{D-1}\left( \left(\bigcup_{i=1}^n (\alpha K + 
t_i) \right) \cap h(x) \right) \dx =
	\sum_{i=1}^m \Int_0^{\alpha-a_i} f(x) \dx \\
	&\leq& \sum_{i=1}^m 
\Int_0^\alpha f(x) \dx \frac{f(\alpha-a_i)}{f(\alpha)} 
	=\frac{\alpha^d\vol_D(K)}{2f(\alpha)} \sum_{i=1}^m \vol_{D-1}\left( 
(\alpha K+t_i) \cap h(0) \right)\\
&=&
	 \frac{\alpha^d\vol_D(K)}{2f(\alpha)} \vol_{D-1}\left(\left( 
\bigcup_{i=1}^m (\alpha K+t_i)\right) \cap h(0) \right) \\
	 &\leq&	\frac{\alpha^d\vol_D(K)}{2f(\alpha)} \bigg[\vol_{D-1}\left( 
(1+2\alpha)K \cap h(0) \right)
\bigg] =
	\frac{\alpha(1+2\alpha)^{D-1}}{2}\vol_D(K).
\end{eqnarray*}

We note that this was the second point of difference from the proof in 
\cite{LN09}: again, the contribution of $K$ to the volume is not subtracted.

This inequality, combined with (\ref{eq:wholealpha}) and 
(\ref{eq:upperbound1alpha}), yields (\ref{eq:Halpha}).

\end{proof}

\providecommand{\bysame}{\leavevmode\hbox to3em{\hrulefill}\thinspace}
\providecommand{\MR}{\relax\ifhmode\unskip\space\fi MR }
\providecommand{\MRhref}[2]{%
  \href{http://www.ams.org/mathscinet-getitem?mr=#1}{#2}
}
\providecommand{\href}[2]{#2}

\end{document}